\documentclass[12pt]{amsart}
	
	\usepackage[a4paper,margin=3cm]{geometry}
	
	\usepackage{amsmath,amssymb,amscd,mathtools}
	\usepackage[mathscr]{eucal}
	
	\usepackage{enumitem}
	\usepackage{xspace}
	\usepackage{ifthen}
	
	\usepackage[all]{xy}
	\xyoption{poly}
	
	\usepackage{xcolor}
	
	\usepackage[
	hyperindex,
	bookmarksnumbered,
	plainpages,
	backref
	]{hyperref}
	
	\newtheorem{theorem}{Theorem}
	\newtheorem{lemma}{Lemma}

	\theoremstyle{definition}
	
	\newtheorem{example}{Example}

	\makeindex
	
		\title{When Is a Bogolyubov Automorphism Inner?}

\begin{document}

	\author{NIKITA ARSKYI} 
		\address{NIKITA ARSKYI: Faculty of Mechanics and Mathematics, \\ Taras Shevchenko  National University of Kyiv, \\  60	Volodymyrska Street, Kyiv 01033, Ukraine}
		\email{nikitaarskyi@knu.ua}
		
	\author{OKSANA BEZUSHCHAK}
		\address{OKSANA BEZUSHCHAK: Faculty of Mechanics and Mathematics, \\ Taras Shevchenko  National University of Kyiv, \\  60	Volodymyrska Street, Kyiv 01033, Ukraine}
		\email{bezushchak@knu.ua}

		\begin{abstract}	Let $V$ be an infinite-dimensional vector space over a field of characteristic not equal to $2$. Given a nondegenerate quadratic form $f$  on $V$, we consider the Clifford algebra $\mathrm{Cl}(V,f)$. Any orthogonal linear transformation of $V$  extends to a Bogolyubov automorphism of $\mathrm{Cl}(V,f)$. We obtain necessary and sufficient conditions for a Bogolyubov automorphism to be inner.	\end{abstract}
		
			\subjclass[2020]{15A66 (Primary) 16W20 (Secondary)}
		
			\keywords{Bogolyubov automorphism, Clifford algebra,    locally matrix algebra}

		\maketitle
		
		\section{Introduction}

Let $\mathbb{F}$ be a field of characteristic not equal to $2$, and let $V$ be a vector space over $\mathbb{F}$.
A mapping $f \colon V \to \mathbb{F}$ is called a \textbf{quadratic form} if the following conditions are satisfied:
\begin{enumerate}
	\item[(1)]	$f(\lambda v) = \lambda^2 f(v), \qquad \lambda \in \mathbb{F},\ v \in V;$
	\item[(2)]	$(v \mid w) = f(v+w) - f(v) - f(w)$	is a bilinear form  on $V$.
\end{enumerate}

A quadratic form is said to be  \textbf{nondegenerate} if the the associated bilinear form $(v \mid w)$ is nondegenerate, that is,  $(v \mid V) = (0) $  implies $ v = 0.$

The  \textbf{Clifford algebra} $\mathrm{Cl}(V,f)$ is the associative algebra generated
by the vector space $V$ and the identity element $1$ with defining
relations $v^2 = f(v)\cdot 1 $ for all $ v \in V.$ If the quadratic form $f$ is fixed, we simply write $\mathrm{Cl}(V)$.

If $\{v_i\}_{i \in I}$ is a basis of $V$ and the index set $I$ is totally
ordered, then the set consisting of the identity $1$ and all ordered products
\[
v_{i_1} v_{i_2} \cdots v_{i_k},
\qquad
i_1 < i_2 < \cdots < i_k,
\]
forms a basis of the Clifford algebra $\mathrm{Cl}(V)$.

The Clifford algebra $\mathrm{Cl}(V)$ is graded by the cyclic group of order $2$: 
\[
\mathrm{Cl}(V) = \mathrm{Cl}(V)_{\overline{0}} + \mathrm{Cl}(V)_{\overline{1}},
\]
where
\[
\mathrm{Cl}(V)_{\overline{0}}
=
\mathbb{F}\cdot 1 
+
\sum_{n=1}^{\infty}
\underbrace{V \cdots V}_{2n},
\qquad
\mathrm{Cl}(V)_{\overline{1}}
=
\sum_{n=1}^{\infty}
\underbrace{V \cdots V}_{2n-1}.
\]

Let $\varphi \colon V \to V$ be an orthogonal linear transformation, that is, $f(\varphi(v)) = f(v) $ for all $ v \in V.$ It is well-known that $\varphi$ admits a unique extension to an automorphism
$[\varphi]$ of the Clifford algebra $\mathrm{Cl}(V)$.
Such automorphisms are called  \textbf{Bogolyubov automorphisms}.

There is extensive literature on automorphisms of Clifford algebras over Hilbert and pre-Hilbert spaces, as well as on $\mathbb{C}^*$-completions of these
algebras (see, for example,~\cite{1,7,9}).
In particular, H.~Araki~\cite{1} described Bogolyubov automorphisms of Clifford
$\mathbb{C}^*$-algebras that are inner automorphisms (see also~\cite{9}).

Let $\varphi \colon V \to V$ be a linear transformation.
For an eigenvalue $\alpha$ of $\varphi$, we denote by
\[
V(\alpha) = \ker(\varphi - \alpha\cdot \mathrm{Id})
\]
the corresponding eigenspace, where $\mathrm{Id}$ denotes the identity
operator on $V$.

A linear transformation $\varphi$ is called \textbf{finitary} if the subspace
$V(1)$ has finite codimension in~$V$.

Let $W \subset V$ be a $\varphi$-invariant subspace of finite codimension.
Then $\varphi$ gives rise to the linear transformation $\varphi | _{V/W}$ on the finite-dimensional quotient space $V/W$.

In this paper, we describe inner Bogolyubov automorphisms of the Clifford
algebra $\mathrm{Cl}(V)$ in a purely algebraic setting.

\begin{theorem}\label{Th_1} Let $\mathbb{F}$ be an algebraically closed field of characteristic  $ \neq 2$, and let $V$ be an infinite-dimensional vector space over $\mathbb{F}$ with a nondegenerate quadratic form. Let $\varphi$ be an orthogonal linear transformation. The Bogolyubov automorphism $[\varphi]$  of the Clifford
	algebra $\mathrm{Cl}(V)$  is inner  if and only if 
\begin{enumerate}
	\item[$(1)$] $\varphi$ is finitary and either $\varphi=\mathrm{Id}$ or	$\det\!\left( \varphi | _{V/V(1)}  \right) = 1,$

	or 
	
	\item[$(2)$] $-\varphi$ is finitary, $ \varphi \not=-  \mathrm{Id},$ $\dim_{\mathbb{F}}(V/V(-1))=k \ge 1, $ and
	\[
	(-1)^k
	\cdot
	\det\!\left( \varphi \big|_{V/V(-1)} \right) = -1 .
	\]
\end{enumerate}
\end{theorem}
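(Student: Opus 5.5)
Throughout, $\sigma=[-\mathrm{Id}]$ denotes the grading automorphism. The plan is to reduce everything to a finite-dimensional nondegenerate $\varphi$-invariant subspace $W$ with $V=W\oplus W^\perp$, on whose complement $\varphi$ acts as $\pm\mathrm{Id}$. There I will use the standard finite-dimensional facts: Cartan--Dieudonn\'e, and that $\mathrm{Cl}(W)$ is central simple when $\dim W$ is even. One fact about the center is used twice: since $V$ is infinite-dimensional and the form is nondegenerate, every $x\in\mathrm{Cl}(V)$ lies in some $\mathrm{Cl}(U)$ with $U$ finite-dimensional nondegenerate, and $x$ must commute with a vector of $U^\perp$; hence the center of $\mathrm{Cl}(V)$ is $\mathbb{F}$.

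\emph{Necessity.} Suppose $[\varphi]=\mathrm{Ad}(a)$ with $a$ invertible.
\begin{enumerate}
\item[(a)] \emph{$a$ is homogeneous.} Since $[\varphi]$ commutes with $\sigma$, the element $\sigma(a)a^{-1}$ is central, so $\sigma(a)=\lambda a$ with $\lambda^2=1$. Thus $a$ is even or odd.
\item[(b)] \emph{$\varphi=\pm\mathrm{Id}$ off a finite-dimensional subspace.} Choose a finite-dimensional nondegenerate $W$ with $a,a^{-1}\in\mathrm{Cl}(W)$. For $v\in W^\perp$, $v$ anticommutes with every vector of $W$, so $ava^{-1}=v$ if $a$ is even and $ava^{-1}=-v$ if $a$ is odd. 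Hence $\varphi$ or $-\varphi$ is the identity on $W^\perp$, which has finite codimension; so $\varphi$ or $-\varphi$ is finitary.
\item[(c)] \emph{$\varphi$ preserves $W$.} $\varphi(W)$ is orthogonal to $\varphi(W^\perp)=W^\perp$, so $\varphi(W)\subseteq W^{\perp\perp}=W$. Enlarging $W$ if needed, using that $\mathbb{F}$ is algebraically closed, we may take $n=\dim W$ even.
\item[(d)] \emph{Sign bookkeeping.} For an anisotropic $u\in W$, conjugation by $u$ acts on $W$ as $-s_u$, where $s_u$ is the reflection. Write $\varphi|_W=s_{u_1}\cdots s_{u_r}$ by Cartan--Dieudonn\'e, and set $b=u_1\cdots u_r$. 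Then $\mathrm{Ad}(b)|_W=(-1)^r\varphi|_W$.
\item[(e)] \emph{Comparing with $a$.} Since $\mathrm{Cl}(W)$ is central simple, $a$ and $b$ differ by a scalar on the part of $W$ where both act as $\pm\varphi$. Hence $a$ is even exactly when $r$ is even, i.e. when $\det\varphi|_W=1$. If $a$ is odd, then $\varphi|_W=-s$ with $s$ a product of an odd number of reflections, so $\det\varphi|_W=(-1)^n\cdot(-1)=-1$.
\end{enumerate}

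\emph{Translating back to the statement.}
\begin{enumerate}
\item[(i)] If $a$ is even, then $W^\perp\subseteq V(1)$ and $V/W^\perp\cong W$. This gives
\[
\det\varphi|_W=\det\varphi|_{V(1)/W^\perp}\cdot\det\varphi|_{V/V(1)}=\det\varphi|_{V/V(1)},
\]
which is condition (1).
\item[(ii)] If $a$ is odd, then $W^\perp\subseteq V(-1)$, and $\varphi$ acts as $-1$ on $V(-1)/W^\perp$, which has dimension $n-k$. So
\[
-1=\det\varphi|_W=(-1)^{n-k}\det\varphi|_{V/V(-1)},
\]
which is condition (2) since $n$ is even.
\item[(iii)] The case $\varphi=-\mathrm{Id}$ must be excluded: $\sigma$ is not inner, because an implementing element would anticommute with all of $V$. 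Then (a) and (b) would force it to be both even and odd, which is impossible.
\end{enumerate}

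\emph{Sufficiency.} The main obstacle is to produce the subspace $W$ when only $V(1)$ (or $V(-1)$) is known to have finite codimension. In infinite dimensions $V(1)^{\perp\perp}$ may exceed $V(1)$, so I will use the following instead.
\begin{enumerate}
\item[(i)] \emph{The key orthogonality.} If $v\perp(\varphi^{-1}-\mathrm{Id})V$, then $(\varphi v\mid u)=(v\mid \varphi^{-1}u)=(v\mid u)$ for all $u$. By nondegeneracy $\varphi v=v$. Thus $\bigl((\varphi^{-1}-\mathrm{Id})V\bigr)^\perp\subseteq V(1)$.
\item[(ii)] \emph{Choosing $W$.} $(\varphi^{-1}-\mathrm{Id})V$ is finite-dimensional because $\varphi$ is finitary. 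Take a finite-dimensional nondegenerate $W$ of even dimension containing it. Then $V=W\oplus W^\perp$, $W^\perp\subseteq V(1)$, and $\varphi(W)=W$ by the argument in (c).
\item[(iii)] \emph{Implementing element.} Factor $\varphi|_W$ into $r$ reflections and set $b=u_1\cdots u_r$. The determinant hypothesis, via the same comparison of determinants, forces $r$ even. Then $\mathrm{Ad}(b)$ agrees with $\varphi$ on $W$, and on $W^\perp$ it is the identity, because $b$ is even. So $[\varphi]=\mathrm{Ad}(b)$.
\item[(iv)] \emph{The case $-\varphi$ finitary.} Apply (i) and (ii) to $-\varphi$, giving $W$ with $W^\perp\subseteq V(-1)$. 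The hypothesis on $(-1)^k\det$ translates, as in necessity (ii), into $\det\varphi|_W=-1$. Hence $\varphi|_W=-s$ with $s$ a product of an odd number $r$ of reflections, and $b=u_1\cdots u_r$ is odd. Then $\mathrm{Ad}(b)=(-1)^r s=\varphi$ on $W$ and $-\mathrm{Id}$ on $W^\perp$, so again $[\varphi]=\mathrm{Ad}(b)$.
\end{enumerate}
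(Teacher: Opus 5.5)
Your proof is correct, but it takes a different route from the paper's.

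\textbf{Necessity.} The paper never shows that the implementing element is homogeneous. Instead, it uses Lemma 3: if $[\varphi]$ fixes $v_1v_2$, then $\mathbb{F}v_1+\mathbb{F}v_2$ is $\varphi$-invariant. Applied to $vv_1, vv_2\in Z_{\mathrm{Cl}(V)}(\mathrm{Cl}(V'))$, this shows that every vector of $V'^\perp$ is an eigenvector. The paper then uses the explicit centralizer of Lemma 2: $v_1\cdots v_n u$ must be fixed, and this forces the determinant condition. You replace Lemma 3 by the triviality of the center of $\mathrm{Cl}(V)$. This makes the implementer even or odd and gives $\varphi=\pm\mathrm{Id}$ on $W^\perp$ in one line.

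\textbf{Sufficiency.} The paper builds the invariant subspace by the induction of Lemma 1. It then uses $\mathrm{Cl}(V)=\mathrm{Cl}(V')\otimes Z_{\mathrm{Cl}(V)}(\mathrm{Cl}(V'))$ and checks that $[\varphi]$ is trivial on the centralizer; the implementer in $\mathrm{Cl}(V')$ then comes implicitly from Skolem--Noether. You replace Lemma 1 by choosing any even-dimensional nondegenerate $W\supseteq(\varphi^{-1}-\mathrm{Id})V$, with invariance automatic from $W=W^{\perp\perp}$. You replace the tensor decomposition and Skolem--Noether by Cartan--Dieudonn\'e, which yields the explicit implementer $u_1\cdots u_r$.

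What your route buys is a shorter argument and a concrete implementer. Nothing in it uses algebraic closedness (your remark in (c) is unnecessary), so it works over any field of characteristic $\neq 2$. The paper's route instead rests on direct basis computations in $\mathrm{Cl}(V)$ rather than on Cartan--Dieudonn\'e.

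Two spots need tightening.
\begin{enumerate}
\item In (e), ``differ by a scalar'' only covers even $r$. For odd $r$, let $w_1,\ldots,w_n$ be an orthogonal basis of $W$ ($n$ even) and set $\omega=w_1\cdots w_n$. This \emph{even} element implements $-\mathrm{Id}_W$ on $\mathrm{Cl}(W)$. Then $\mathrm{Ad}(b)=\mathrm{Ad}(\omega^r a)$ on $\mathrm{Cl}(W)$, so by central simplicity $b\in\mathbb{F}^\times\omega^r a$. Hence $a$ and $b$ have the same parity.
\item In necessity (iii), (a) and (b) only force $a$ to be odd. The contradiction comes from (e), since $\det(-\mathrm{Id}_W)=(-1)^n=1$ would make $a$ even. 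Equivalently, the case is already excluded by (ii) with $k=0$.
\end{enumerate}
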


\begin{example}
Let $V$ be countable-dimensional and let $v_1, v_2, \dots$ be an orthonormal basis of $V$.
The linear transformation $\varphi$, defined by
\[
\varphi(v_1) = v_1, \quad
\varphi(v_i) = -v_i \quad  \text{for } i \ge 2,
\]
is orthogonal. We have $\dim_{\mathbb{F}}(V/V(-1))= 1 $ and $	\det\!\left( \varphi \big|_{V/V(-1)} \right) = 1 .$ For an arbitrary element $a \in \mathrm{Cl}(V)$, we have
\[
[\varphi](a) = v_1^{-1} a v_1 .
\] \end{example}

In~\cite{6}, we describe Bogolyubov derivations of $\mathrm{Cl}(V)$ that are inner.

The Clifford algebra $\mathrm{Cl}(V)$ is a unital locally matrix algebra
corresponding to the Steinitz number $2^{\infty}$ (see \cite{4,3,2}). Automorphisms and derivations of arbitrary  countable-dimensional
unital locally matrix algebras were described in~\cite{5}.

\section{Proof of the Lemmas} 
We call a finite-dimensional subspace $U \subset V$
\textbf{nondegenerate} if the restriction of the quadratic form to $U$
is nondegenerate.

\begin{lemma}\label{L_1}
\begin{enumerate}
	\item[$(1)$]
	Suppose that $V(1)$ has finite codimension in $V$.
	Then there exists an even-dimensional, nondegenerate,
	$\varphi$-invariant subspace $W \subset V$ such that
	\[
	W^{\perp}
	=
	\{ v \in V \mid (v \mid W) = (0) \}
	 \subseteq V(1).
	\]
	\item[$(2)$]
	Suppose that $V(-1)$ has finite codimension in $V$.
	Then there exists an even-dimensional, nondegenerate,
	$\varphi$-invariant subspace $W \subset V$ such that
	\[
	W^{\perp} =	\{ v \in V \mid (v \mid W) = (0) \}
	\subseteq V(-1).
	\]
\end{enumerate} \end{lemma}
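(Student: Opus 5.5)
Part (2) reduces to part (1): replacing $\varphi$ by $-\varphi$, which is also orthogonal, turns $V(-1)$ for $\varphi$ into $V(1)$ for $-\varphi$, while $\varphi$-invariance and $(-\varphi)$-invariance coincide. So it suffices to prove (1). The plan is to build a nondegenerate $\varphi$-invariant subspace $W$ containing the "moving part" of $\varphi$, namely the image $R=(\varphi-\mathrm{Id})(V)$, and then adjust its dimension to be even.

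\emph{Step 1: the key orthogonality.} Let $R=(\varphi-\mathrm{Id})(V)$. Since $V/V(1)\cong R$, the subspace $R$ is finite-dimensional. We have $R^{\perp}=V(1)$. Indeed, by orthogonality $(\varphi v\mid \varphi w)=(v\mid w)$, so $((\varphi-\mathrm{Id})v\mid w)=(\varphi v\mid w)-(\varphi v\mid \varphi w)=(\varphi v\mid (\mathrm{Id}-\varphi)w)$. Since $\varphi$ is bijective (see Step 2), $w\in R^\perp$ iff $(u\mid (\varphi-\mathrm{Id})w)=0$ for all $u$, iff $w\in V(1)$ by nondegeneracy. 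Consequently any subspace $W\supseteq R$ satisfies $W^\perp\subseteq R^\perp=V(1)$. Moreover any $W\supseteq R$ is $\varphi$-invariant, because $\varphi w=w+(\varphi-\mathrm{Id})w\in W+R=W$.

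\emph{Step 2: the auxiliary facts.} First, $\varphi$ is injective: if $\varphi v=0$ then $(v\mid w)=(\varphi v\mid\varphi w)=0$ for all $w$, so $v=0$. Surjectivity holds because $\varphi$ is the identity on $V(1)$, which has finite codimension, and is injective on a finite-dimensional complement (a finite-rank perturbation of $\mathrm{Id}$). Second, every finite-dimensional $U$ is contained in a finite-dimensional nondegenerate $U'$. To see this, pick $x_1,\dots,x_m\in V$ so that the functionals $(\cdot\mid u)$ on $U$ are separated. Since $V\to U^*$ is surjective by nondegeneracy, choose $y_j$ with $(y_j\mid u_i)=\delta_{ij}$ for a basis $u_i$ of $U$. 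Then $U+\mathrm{span}(y_j)$ has radical meeting $U$ trivially, and iterating terminates; more cleanly, standard hyperbolic-pair completion gives a nondegenerate space of dimension at most $2\dim U$ containing $U$.

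\emph{Step 3: parity.} Take $W_0\supseteq R$ finite-dimensional and nondegenerate. If $\dim W_0$ is odd, then $W_0^\perp$ is nonzero, since $V$ is infinite-dimensional and $V=W_0\oplus W_0^\perp$ for nondegenerate finite $W_0$. The restriction of the form to $W_0^\perp$ is nondegenerate, so it contains a vector $x$ with $f(x)\ne0$. Set $W=W_0\oplus\mathbb{F}x$, which is nondegenerate, even-dimensional and contains $R$. By Step 1 it is $\varphi$-invariant with $W^\perp\subseteq V(1)$.

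The main obstacle is Step 2's embedding of a finite-dimensional subspace in a nondegenerate finite-dimensional one; I would do it by induction on the dimension of the radical $U\cap U^\perp$. For a radical vector $r$, choose $y$ with $(r\mid y)=1$; then $\mathrm{rad}(U+\mathbb{F}y)$ is strictly smaller than $\mathrm{rad}(U)$. This uses the standard argument that $r$ leaves the radical and no new radical element appears.
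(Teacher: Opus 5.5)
Your proof is correct, and it takes a somewhat different route from the paper's.

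The paper's route is as follows. It first produces a finite-dimensional $\varphi$-invariant subspace $U$ with $V = U + V(1)$, obtained from the fact that $\varphi$ is a root of $(1-t)\chi(t)$. It then shows $U^\perp \subseteq V(1)$ using $\bigl((\varphi-\mathrm{Id})(V)\mid V(1)\bigr)=(0)$ together with the $\varphi$-invariance of $U^\perp$. Finally it removes the radical of $U$ by an induction on $\dim U$ that must preserve $\varphi$-invariance. In that induction it picks $u \in U\cap U^\perp \subseteq V(1)$ and $v \in V(1)$ with $(u\mid v)\neq 0$, splits off the pointwise-fixed nondegenerate plane $\mathbb{F}u+\mathbb{F}v$, and projects $U$ onto its orthogonal complement.

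You instead take $R=(\varphi-\mathrm{Id})(V)$ itself as the core and prove the exact identity $R^\perp = V(1)$. Every subspace containing $R$ is then automatically $\varphi$-invariant with orthogonal complement inside $V(1)$. So the nondegeneracy repair can ignore $\varphi$ entirely and becomes the standard radical reduction: if $r$ is a radical vector and $(r\mid y)=1$, then $\mathrm{rad}(U+\mathbb{F}y)$ is a hyperplane in $\mathrm{rad}(U)$.

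What each approach buys:
\begin{itemize}
\item Decoupling invariance from nondegeneracy makes your argument shorter and removes the need to construct the invariant $U$.
\item It also forces you to prove explicitly that $\varphi$ is bijective, which the paper uses tacitly when it writes $\varphi^{-1}$.
\item The paper's construction, in exchange, keeps every added vector inside $V(1)$, though the statement does not need this.
\item The parity step is identical in both.
\end{itemize}

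One caution on Step 2. Your first sketch, which adjoins a dual system $y_j$ and then ``iterates'', does not obviously terminate, because the enlarged space can acquire a new radical. Rely instead on the induction on the dimension of the radical in your final paragraph, which is correct.
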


\begin{proof} We prove only part~(1). Part~(2) is proved similarly. The subspace $(\mathrm{Id}-\varphi)(V)$ is finite-dimensional.
Let $\chi(t)$ be the characteristic polynomial of the restriction of
$\varphi$ to the subspace $(\mathrm{Id}-\varphi)(V)$.
Then $\varphi$ is a root of the polynomial $(1-t)\chi(t)$.

There exists a finite-dimensional $\varphi$-invariant subspace
$U \subseteq V$ such that $V = U + V  (1).$
For arbitrary elements $v_1, v_2 \in V$, we have
\[
(\varphi(v_1) \mid v_2) = (v_1 \mid \varphi^{-1}(v_2)).
\]  
Hence,
\[
\bigl((\varphi - \mathrm{Id})(v_1) \mid v_2\bigr)
=
\bigl(v_1 \mid (\varphi^{-1} - \mathrm{Id})(v_2)\bigr)
=
\bigl(v_1 \mid \varphi^{-1}(\mathrm{Id}-\varphi)(v_2)\bigr).
\]
This implies that $\bigl( (\varphi - \mathrm{Id})(V)  \mid     V(1)\bigr)=(0).$

Let $v \in U^{\perp}$ and suppose that $(\varphi - \mathrm{Id})(v) \neq 0$.
Then $(\varphi - \mathrm{Id})(v)$ is orthogonal both to $U$ and to $V(1)$.
This contradicts the nondegeneracy of the quadratic form on $V$.
Therefore, we have proved that $U^{\perp} \subseteq V(1).$

Suppose that the subspace $U$ is nondegenerate.
If $\dim_{\mathbb{F}} U$ is even, then we are done.
Assume that $\dim_{\mathbb{F}} U$ is odd.
Since the restriction of the quadratic form to $U^{\perp} $ is nondegenerate, there exists an element $v \in U^{\perp} $ such that $(v \mid v) \neq 0$. The subspace $U + \mathbb{F}v $ satisfies all the required assumptions.

Now suppose that the subspace $U$ is not nondegenerate.
Let $0 \neq u \in U \cap U^{\perp} \subseteq U \cap V(1)$.
Since the quadratic form on $V$ is nondegenerate, there
exists an element $v \in V(1)$ such that $(u \mid v) \neq 0.$ 
The subspace $\mathbb{F} u + \mathbb{F} v$ is nondegenerate, and we obtain the orthogonal decomposition $V = (\mathbb{F} u + \mathbb{F} v) \oplus (\mathbb{F} u + \mathbb{F} v)^{\perp}.$

Consider the projection $\pi \colon V \to V' = (\mathbb{F} u + \mathbb{F} v)^{\perp}.$ We have $V' = \pi(U) + V'(1),$ where the subspace $\pi(U)$ is $\varphi$-invariant and $\dim_{\mathbb{F}} \pi(U) < \dim_{\mathbb{F}} U.$ Using induction on $\dim_{\mathbb{F}} U$, we find an even-dimensional
$\varphi$-invariant nondegenerate subspace $W' \subset V'$ such that $(W')^{\perp} \cap V' \subseteq V'(1).$ It remains to choose $W = \mathbb{F} u + \mathbb{F} v + W'.$

This completes the proof of the lemma. \end{proof}

For a subset $S$ of an algebra $A$, let
$$Z_A(S)=\{a\in A\ |\ ax=xa \text{ for each } x\in S\}$$
be the centralizer of the set $S$ in $A$.

\begin{lemma}\label{L_2}
Let $U$ be a nondegenerate even-dimensional subspace of $V$. Let $v_1,\ldots,v_n$ be an orthonormal basis of $U$. Then
$$Z_{\mathrm{Cl}(V)}(\mathrm{Cl}(U))=\mathrm{Cl}(U^{\perp})_{\overline{0}}+v_1\cdots v_n\mathrm{Cl}(U^{\perp})_{\overline{1}}.$$
\end{lemma}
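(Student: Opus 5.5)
We use the tensor-type decomposition $\mathrm{Cl}(V)\cong \mathrm{Cl}(U)\,\widehat{\otimes}\,\mathrm{Cl}(U^{\perp})$, which comes from the orthogonal decomposition $V=U\oplus U^{\perp}$. Since $U$ is nondegenerate and finite-dimensional, this decomposition holds. Choosing a basis of $V$ that extends $v_1,\dots,v_n$ by a basis of $U^{\perp}$ and ordering $v_1<\dots<v_n$ first, every element $a\in\mathrm{Cl}(V)$ can be written uniquely as
\[
a=\sum_{S\subseteq\{1,\dots,n\}} v_S\, a_S,\qquad a_S\in \mathrm{Cl}(U^{\perp}),
\]
where $v_S$ is the ordered product of the $v_i$ with $i\in S$. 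Orthonormality means $(v_i\mid v_j)=0$ for $i\ne j$ and $v_i^2=f(v_i)\neq 0$. In particular the $v_i$ anticommute pairwise, each is invertible, and each anticommutes with every vector of $U^{\perp}$. Consequently $v_i$ commutes with $\mathrm{Cl}(U^{\perp})_{\overline 0}$ and anticommutes with $\mathrm{Cl}(U^{\perp})_{\overline 1}$.

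The inclusion $\supseteq$ is a direct check. $\mathrm{Cl}(U)$ is generated by the $v_i$, and $\mathrm{Cl}(U^{\perp})_{\overline0}$ commutes with every $v_i$. Set $\omega=v_1\cdots v_n$. Because $n$ is even, $v_i$ anticommutes with $\omega$: moving $v_i$ across $\omega$ produces $n-1$ sign changes, an odd number. For $b\in \mathrm{Cl}(U^{\perp})_{\overline1}$ we therefore get
\[
v_i\,\omega b=-\omega v_i b=\omega b v_i,
\]
so $\omega b$ commutes with every $v_i$.

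For $\subseteq$, take $a$ in the centralizer and split $a=a_0+a_1$ using the $\mathbb Z_2$-grading of the coefficients $a_S$. Conjugation by $v_i$ preserves the grading of $\mathrm{Cl}(V)$, so each graded component of $a$ also centralizes; we may thus treat homogeneous pieces separately, applying the grading on the $U^{\perp}$-coefficients. We compute
\[
v_i\,(v_S\, c)\,v_i^{-1}=\varepsilon_i(S)\,(-1)^{|c|}\, v_S\, c,
\]
for homogeneous $c\in\mathrm{Cl}(U^{\perp})$. Here $\varepsilon_i(S)=(-1)^{|S|}$ if $i\notin S$ and $(-1)^{|S|-1}$ if $i\in S$. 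The elements $v_S c$ with distinct $S$ are linearly independent over the coefficients, so centrality forces $\varepsilon_i(S)(-1)^{|c|}=1$ for every $i$ and every $S$ with $a_S\ne 0$.

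If $S\neq\emptyset$ and $S\neq\{1,\dots,n\}$, pick $i\in S$ and $j\notin S$; then $\varepsilon_i(S)=-\varepsilon_j(S)$, a contradiction. So only $S=\emptyset$ and $S=\{1,\dots,n\}$ survive. For $S=\emptyset$ we have $\varepsilon=1$, which forces $c$ to be even. For $S$ the full set, $\varepsilon=(-1)^{n-1}=-1$, which forces $c$ to be odd. This yields exactly $\mathrm{Cl}(U^\perp)_{\overline0}+\omega\,\mathrm{Cl}(U^\perp)_{\overline1}$.

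The only step that needs care is the uniqueness of the expansion $a=\sum_S v_S a_S$, which is what lets us compare coefficients. It follows from the stated PBW-type basis of $\mathrm{Cl}(V)$, taking a basis of $V$ in which the $v_i$ come first and are followed by a basis of $U^{\perp}$. Such a basis exists because $V=U\oplus U^\perp$ by nondegeneracy of $U$.
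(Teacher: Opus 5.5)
Your proof is correct and follows essentially the same route as the paper. Both arguments compute the sign with which each $v_i$ commutes past $v_S$ times a homogeneous element of $\mathrm{Cl}(U^{\perp})$; the paper does this on the monomials $v_{i_1}\cdots v_{i_k}u_{j_1}\cdots u_{j_l}$. Both then conclude that only $S=\emptyset$ with even part, or $S=\{1,\dots,n\}$ with odd part, survives. Your version is more explicit than the paper's in two places: it checks the inclusion $\supseteq$, and it justifies the passage from basis elements to arbitrary elements by comparing coefficients, which implicitly uses characteristic $\neq 2$. The paper leaves both of these implicit.
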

\begin{proof}
Let $\{u_i\}_{i\in I}$ be an ordered basis of $U^{\perp}$. Consider an element
$$x=v_{i_1}\cdots v_{i_k}u_{j_1}\cdots u_{j_l},$$
where $1\leq i_1<\cdots<i_k\leq n$, $j_1<\ldots<j_l$. For each $i\notin\{i_1,\ldots,i_k\}$,
$$v_ix=(-1)^{k+l}xv_i.$$
For each $t\in\{1,\ldots,k\}$,
$$v_{i_t}x=(-1)^{k+l-1}xv_{i_t}.$$
Hence, the element $x$ will commute with each of the elements $v_1,\ldots,v_n$ if and only if either $k=0$ and $l$ is even, or $k=n$ and $l$ is odd. Otherwise, $x$ will anticommute with $v_i$ for some $i$. This completes the proof of the lemma.
\end{proof}

\begin{lemma}\label{L_3}	 	Let $\varphi \colon V \to V$ be an orthogonal linear transformation, and let $[\varphi]$ be the corresponding Bogolyubov automorphism.	Let $v_1, v_2 \in V$ be linearly independent.	If	$[\varphi](v_1 v_2) = v_1 v_2,$	 	then the subspace $\mathbb{F} v_1 + \mathbb{F} v_2$ is $\varphi$-invariant.
	 \end{lemma}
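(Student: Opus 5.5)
The plan is to write $\varphi(v_1)\varphi(v_2)=v_1v_2$ in the Clifford algebra and compare components with respect to a PBW-type basis of $\mathrm{Cl}(V)$.

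First set $w_1=\varphi(v_1)$ and $w_2=\varphi(v_2)$. These are linearly independent because $\varphi$ is injective, as an orthogonal transformation with nondegenerate $f$. Suppose, for contradiction, that $P=\mathbb{F}v_1+\mathbb{F}v_2$ is not $\varphi$-invariant. Then $\varphi(P)\neq P$, and since both are $2$-dimensional, $\dim(P+\varphi(P))\ge 3$.

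Next, use the filtration of $\mathrm{Cl}(V)$ by degree and the associated graded isomorphism $\mathrm{gr}\,\mathrm{Cl}(V)\cong\Lambda(V)$. This follows from the basis description in the introduction: for any basis and ordering, the ordered monomials form a basis. Modulo the scalars $\mathbb{F}\cdot 1$, the degree-$2$ part of $xy$ for $x,y\in V$ is $x\wedge y\in\Lambda^2V$. Concretely, $xy=\tfrac12(xy-yx)+\tfrac12(x\mid y)$, and the elements $\tfrac12(xy-yx)$ correspond injectively to $\Lambda^2V$ inside $\mathrm{Cl}(V)_{\overline 0}/\mathbb{F}1$. Checking this injectivity on a basis is the one point needing care, and it is immediate from the ordered-monomial basis.

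Now the equation $w_1w_2=v_1v_2$ forces $w_1\wedge w_2=v_1\wedge v_2$ in $\Lambda^2V$. A nonzero decomposable bivector determines its $2$-dimensional span: $\{x\in V : x\wedge v_1\wedge v_2=0\}=P$. Hence $\varphi(P)=\mathrm{span}(w_1,w_2)=P$, which contradicts the assumption and gives the lemma.

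To stay elementary, I would instead argue with coordinates. Choose a basis of $P+\varphi(P)$ extending $v_1,v_2$, say by some $e\in\varphi(P)\setminus P$, and extend it to an ordered basis of $V$. Expanding $w_1w_2$ in the monomial basis, the coefficient of $v_ie$ or $ev_i$ is a $2\times2$ minor. This minor must vanish, since the right-hand side involves only $1$ and $v_1v_2$, and that forces the $e$-components of $w_1,w_2$ to be proportional in a way incompatible with independence modulo $P$.

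The main obstacle is only the bookkeeping of the coordinate check: the monomials must be reordered correctly, with the scalar terms coming from $(x\mid y)$ kept separate from the degree-$2$ terms.
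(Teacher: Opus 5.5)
Your proposal is correct and is essentially the paper's proof. The paper compares the coefficients of the ordered monomials $v_iv_j$ in $\varphi(v_1)\varphi(v_2)=v_1v_2$; those coefficients $\alpha_i\beta_j-\alpha_j\beta_i$ are exactly the Pl\"ucker coordinates of your identity $\varphi(v_1)\wedge\varphi(v_2)=v_1\wedge v_2$ in $\Lambda^2V$, and your coordinate fallback is the paper's argument with $e=v_3$.

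One small correction to that fallback: the vanishing $v_ie$-minors make both pairs $(\alpha_i,\beta_i)$, $i=1,2$, proportional to the nonzero pair of $e$-components. That contradicts the $v_1v_2$-coefficient $\alpha_1\beta_2-\alpha_2\beta_1=1$, not ``independence modulo $P$'', since $\varphi(v_1),\varphi(v_2)$ need not be independent modulo $P$.
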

\begin{proof} 
Suppose that $\varphi(v_1)\notin\mathbb{F} v_1 + \mathbb{F} v_2$. Extend the system $v_1,v_2$ to a basis $v_1,v_2,v_3,\ldots$ of $V$. Let
$$\varphi(v_1)=\sum_{i=1}^{n}{\alpha_iv_i} \quad \text{and} \quad \varphi(v_2)=\sum_{i=1}^{n}{\beta_iv_i}.$$
Without loss of generality, let $\alpha_3\neq0$. We have that
\begin{gather*}
v_1v_2=[\varphi](v_1v_2)=(\alpha_1\beta_2-\alpha_2\beta_1)v_1v_2+\sum_{i\leq j}{\alpha_j\beta_if(v_i,v_j)1}+\\
+\sum_{\substack{i<j\\(i,j)\neq(1,2)}}{(\alpha_i\beta_j-\alpha_j\beta_i)v_iv_j}.
\end{gather*}

Since the system of vectors $\{1;v_iv_j,i<j\}$ is linearly independent, it follows that $\alpha_1\beta_2-\alpha_2\beta_1=1$, $\alpha_1\beta_3=\alpha_3\beta_1$ and $\alpha_2\beta_3=\alpha_3\beta_2$. However, the last two equalities contradict the first one. Hence, $\varphi(v_1)\in\mathbb{F}v_1+\mathbb{F}v_2$. Analogously, $\varphi(v_2)\in\mathbb{F}v_1+\mathbb{F}v_2$, which completes the proof of the lemma.
\end{proof}

\section{Proof of the Theorem} 

\begin{proof}[Proof of Theorem $\ref{Th_1}$] \textit{Necessity.} Let $[\varphi]$ be a conjugation by an invertible element $x$. There exists a nondegenerate even-dimensional subspace $V'\subseteq V$ such that $x\in\mathrm{Cl}(V')$. 

Choose an arbitrary vector $0\neq v\in V'^{\perp}$. Choose arbitrary elements $v_1,v_2 \in V'^\perp$ such that the system $v,v_1,v_2$ is linearly independent. The elements $v v_1$ and $v v_2$ lie in $Z_{\mathrm{Cl}(V)}(\mathrm{Cl}(V'))$.
	Hence,
	\[
	[\varphi](v v_1) = v v_1,
	\qquad
	[\varphi](v v_2) = v v_2.
	\]
	By Lemma~\ref{L_3}, the subspaces $\mathbb{F}  v + \mathbb{F}  v_1$ and $\mathbb{F}  v + \mathbb{F}  v_2$ are
	$\varphi$-invariant. Hence,
	\[
	\varphi(v) \in (\mathbb{F}  v + \mathbb{F}  v_1) \cap (\mathbb{F}  v + \mathbb{F}  v_2) = \mathbb{F}  v.
	\]
	We have shown that any element $v \in V'^\perp$ is an eigenvector of $\varphi$, with
	\[
	\varphi(v) = v \quad \text{or} \quad \varphi(v) = -v.
	\]
	
	Hence, $V'^\perp \subseteq  V(1)$ or $V'^\perp \subseteq  V(-1)$. Consider the first case. The second case is considered similarly.

    Let $v_1,\ldots, v_n$ be an orthonormal basis of $V'$. It follows from Lemma \ref{L_2} that for each $u\in V'^{\perp}$,
    $$[\varphi](v_1\cdots v_nu)=v_1\cdots v_nu.$$
    We have that
    $$[\varphi](v_1\cdots v_nu)=\det\varphi|_{V/V(1)}v_1\cdots v_nu.$$

    Thus, $\det\varphi|_{V/V(1)}=1$.

    \textit{Sufficiency.} Suppose condition $(1)$ of Theorem \ref{Th_1} is satisfied. Case $(2)$ is considered similarly. By Lemma \ref{L_1}, there exists a nondegenerate, even-dimensional, $\varphi$-invariant subspace $V'$ of $V$ such that $V'^{\perp}\subseteq V(1)$. We have that $\mathrm{Cl}(V)=\mathrm{Cl}(V')\otimes Z_{\mathrm{Cl}(V)}(\mathrm{Cl}(V'))$; see \cite{8,10}.

    Let $v_1,\ldots,v_n$ be an orthonormal basis of $V'$. For each linearly independent $u_1,u_2\in V'^{\perp}$,
    $$[\varphi](v_1\cdots v_nu_1)=\det\varphi|_{V/V(1)}v_1\cdots v_nu_1=v_1\cdots v_nu_1\quad\text{and}\quad[\varphi](u_1u_2)=u_1u_2.$$
    It follows from Lemma \ref{L_2} that $[\varphi]$ acts identically on $Z_{\mathrm{Cl}(V)}(\mathrm{Cl}(V'))$. Thus $[\varphi]$ must be a conjugation with an element from $\mathrm{Cl}(V')$.
    
    This completes the proof of Theorem~\ref{Th_1}. \end{proof}

 	\section*{Acknowledgments}

 \medskip
 
The authors are deeply grateful to Efim Zelmanov for valuable discussions.

\end{document}